\documentclass[12pt]{article}
\usepackage{amssymb,amsthm,amsmath,latexsym}
\newtheorem{theorem}{Theorem}
\newtheorem{proposition}[theorem]{Proposition}
\newtheorem{lemma}[theorem]{Lemma}
\theoremstyle{remark}
\newtheorem{remark}[theorem]{Remark}
\newcommand{\FF}{\mathbb{F}}
\newcommand{\ZZ}{\mathbb{Z}}

\newcommand{\cG}{\mathcal{G}}
\newcommand{\allone}{\mathbf{1}}
\newcommand{\allzero}{\mathbf{0}}

\DeclareMathOperator{\GL}{GL}
\DeclareMathOperator{\Aut}{Aut}
\DeclareMathOperator{\wt}{wt}

\title{A complete classification of doubly even \\ self-dual codes of 
length 40\footnote{This work was supported by JST PRESTO program.}
}

\author{
Koichi Betsumiya\\
\small Graduate School of Science and Technology\\[-0.8ex] 
\small Hirosaki University\\[-0.8ex] 
\small Hirosaki 036--8561, Japan\\
\small\tt betsumi@cc.hirosaki-u.ac.jp\\
\and
Masaaki Harada\\
\small Department of Mathematical Sciences\\[-0.8ex] 
\small Yamagata University\\[-0.8ex] 
\small Yamagata 990--8560, Japan, and\\[-0.8ex] 
\small PRESTO, Japan Science and Technology Agency (JST)\\[-0.8ex] 
\small Saitama 332--0012, Japan \\[-0.8ex] 
\small\tt mharada@sci.kj.yamagata-u.ac.jp\\
\and
Akihiro Munemasa\\
\small Graduate School of Information Sciences\\[-0.8ex] 
\small Tohoku University\\[-0.8ex] 
\small Sendai 980--8579, Japan\\
\small\tt munemasa@math.is.tohoku.ac.jp\\
}


\begin{document}

\maketitle

\begin{abstract}
A complete classification of binary doubly even
self-dual codes of length $40$ is given.
As a consequence, a classification of binary extremal
self-dual codes of length $38$ is also given.

\end{abstract}

\section{Introduction}

As described in~\cite{RS-Handbook},
self-dual codes are an important class of linear codes for both
theoretical and practical reasons.
It is a fundamental problem to classify self-dual codes
of modest lengths and 
much work has been done towards classifying self-dual codes over $\FF_q$
for $q=2$ and $3$,
where $\FF_q$ denotes the finite field of order $q$
and $q$ is a prime power
(see~\cite{RS-Handbook}).

Codes over $\FF_2$ are called {\em binary} and
all codes in this paper are binary.
The \textit{dual code} $C^{\perp}$ of a code 
$C$ of length $n$ is defined as
$
C^{\perp}=
\{x \in \FF_2^n \mid x \cdot y = 0 \text{ for all } y \in C\},
$
where $x \cdot y$ is the standard inner product.
A code $C$ is called 
\textit{self-dual} if $C = C^{\perp}$. 
A self-dual code $C$ is {\em doubly even} if all
codewords of $C$ have weight divisible by four, and {\em
singly even} if there is at least one codeword of weight $\equiv 2
\pmod 4$.
It is  known that a self-dual code of length $n$ exists 
if and only if  $n$ is even, and
a doubly even self-dual code of length $n$
exists if and only if $n$ is divisible by eight.
The minimum weight $d$ of a self-dual code of length $n$
is bounded by
$d  \le 4 \lfloor{\frac {n}{24}} \rfloor + 6$ 
if $n \equiv 22 \pmod {24}$, 
$d  \le 4  \lfloor{\frac {n}{24}} \rfloor + 4$ 
otherwise \cite{MS73} and \cite{Rains}.
A self-dual code meeting the bound is called  {\em extremal}.

Two codes $C$ and $C'$ are {\em equivalent}, denoted $C \cong C'$,
if one can be
obtained from the other by permuting the coordinates.
An {\em automorphism} of $C$ is a permutation of the coordinates of $C$
which preserves $C$.
The set consisting of all automorphisms of $C$ is called the
{\em automorphism group} of $C$ and it is denoted by 
$\Aut(C)$. 

A classification of doubly even self-dual codes 
was done for lengths $8,16$ in \cite{Pless72},
for length $24$ in \cite{PS75} and for length $32$ in \cite{CPS}.
For length $40$, only some partial classifications have been
done by various authors. Extremal doubly even self-dual
codes of length $40$ with automorphism of a prime order
$p$ having $c$ cycles have been classified for
$(p,c)=(19,2),(7,5),(5,4)$ in \cite{Yo},
$(p,c)=(3,6)$ in \cite{B04},
$(p,c)=(3,8)$ in \cite{Kim10}, and
$(p,c)=(5,8)$ in \cite{YZ}.
The main aim of this paper is to give a classification
of doubly even self-dual codes of length $40$.

\begin{theorem}\label{thm:main}
There are $94343$ inequivalent doubly even self-dual codes of 
length $40$, $16470$ of which are extremal.
\end{theorem}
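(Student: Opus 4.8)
The plan is to carry out an exhaustive, computer-assisted classification whose completeness is certified a priori by the mass formula. Recall that the number of labeled doubly even self-dual codes of length $n$ (with $8 \mid n$) is the explicit product $\prod_{i=0}^{n/2-2}(2^i+1)$, so that applying orbit counting to the coordinate-permutation action of the symmetric group $S_n$ yields
$$\sum_{[C]} \frac{1}{|\Aut(C)|} = \frac{1}{n!}\prod_{i=0}^{n/2-2}(2^i+1),$$
the sum being over equivalence classes of doubly even self-dual codes. For $n=40$ the right-hand side is a fixed rational number $M$. The strategy is therefore to assemble a list $\mathcal{L}$ of pairwise inequivalent doubly even self-dual $[40,20]$ codes and to prove it complete by checking that $\sum_{C \in \mathcal{L}} 1/|\Aut(C)|$ equals $M$. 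Once completeness is certified, Theorem~\ref{thm:main} follows by counting: $|\mathcal{L}|=94343$, and the extremal codes are exactly the members of $\mathcal{L}$ of minimum weight $8$.

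The minimum weight organizes both generation and counting. By the bound quoted above a doubly even self-dual code of length $40$ has $d \le 8$, and since all weights are divisible by $4$ we have $d \in \{4,8\}$, the extremal codes being those with $d=8$. Generation can in principle be done uniformly by a neighbor search: from a Type II code $C$ one obtains a Type II neighbor $\langle C \cap x^\perp,\, x\rangle$ for any $x \in \FF_2^{40}\setminus C$ of weight $\equiv 0 \pmod 4$, and the Type II neighbor graph of a fixed length is connected, so a breadth-first traversal from a single seed (for instance the orthogonal sum of five copies of the $[8,4,4]$ code) adjoining each newly found inequivalent code will reach every class. To make this efficient I would split the search by minimum weight: the $d=4$ codes, which form the large majority, each carry a weight-$4$ codeword and can be produced more cheaply by a gluing/augmentation construction from the classified length-$32$ codes of \cite{CPS}, while the comparatively few extremal ($d=8$) codes are gathered from the known examples of \cite{Yo,B04,Kim10,YZ} together with their neighbors. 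The two streams feed $\mathcal{L}$, with the connectivity of the neighbor graph ensuring that no class is unreachable.

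At every stage one must decide equivalence and compute automorphism orders. For this I would encode each code (or a convenient invariant substructure, such as its set of minimum-weight supports together with the coordinate set) as a vertex- and edge-colored graph so that code equivalences correspond precisely to graph isomorphisms, and then invoke a canonical-labeling engine such as \texttt{nauty}: the canonical form discards duplicates, and the order of the resulting automorphism group delivers $|\Aut(C)|$. Accumulating these orders gives the running mass total to compare against $M$.

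The main obstacle is not a single hard estimate but the certified exhaustiveness of the generation at scale: one must be confident that the augmentation step reaches every $d=4$ code and that the neighbor search reaches every extremal code, while handling candidate lists vastly larger than the final $94343$ classes. Here the mass formula is decisive, converting the question ``have all codes been found?'' into the finite, checkable identity $\sum 1/|\Aut(C)| = M$, so that correctness and termination are guaranteed the instant the totals agree—provided only that every automorphism order is computed exactly. What remains is purely a matter of computational efficiency: making the equivalence tests and neighbor enumerations fast enough to complete the search within feasible time and memory.
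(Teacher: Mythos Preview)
Your mass-formula framework---generate candidates, discard equivalents, declare completeness once $\sum 1/|\Aut(C)|$ hits the target---is exactly what the paper does, and the split into $d=4$ and $d=8$ is the same. Where you diverge is in how each stream is actually generated, and there the paper is both more specific and different in substance.

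For $d=4$ the paper does \emph{not} build up from the length-$32$ classification. A weight-$4$ codeword in a length-$40$ Type~II code is naturally tied to length $36$, not $32$: subtracting two coordinates gives a self-dual $[38,19]$ code of minimum weight $2$, hence essentially a self-dual $[36,18]$ code. The paper therefore starts from the full list of $519492$ self-dual codes of length $36$ (classified in~\cite{HM36}), forms the direct sum with the $[2,1]$ code, and applies the Brualdi--Pless extension (Proposition~\ref{prop:BP}) to obtain $519492$ length-$40$ Type~II candidates with $d=4$; sifting for equivalence leaves $77873$, and the partial mass matches King's count $N(40,4)$. Your ``augmentation from length $32$'' is not a recognized construction here and would need to be spelled out before it could be trusted to hit every $d=4$ class.

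For $d=8$ the paper does not rely on neighbor-graph connectivity or on seeding from the literature. Instead it exploits the fact (visible in the weight enumerator) that every extremal code contains a weight-$16$ codeword $x$. Shortening on $\support(x)$ and on its complement yields doubly even codes $C_1\subset\FF_2^{16}$ and $C_2\subset\FF_2^{24}$, each containing $\allone$, and the original code is recovered from an isometry $f:C_1^\perp/C_1\to C_2^\perp/C_2$ of the induced quadratic forms. One then runs over all such pairs $(C_1,C_2)$ (available from Miller's tables) and over double-coset representatives $\cG_0(C_2)\backslash\Phi(C_1,C_2)/\cG_0(C_1)$, restricting to $\dim C_1\ge 3$ for tractability; this produces $16468$ extremal codes, and exactly two further codes (with $\dim C_x\le 2$ for every weight-$16$ $x$) are picked up from~\cite{Yo,YZ}. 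The partial mass then matches $N(40,8)$. Your neighbor-search proposal is not wrong in principle, but it is a blunter instrument: neighbors of extremal codes are mostly non-extremal, so you would effectively be running the full search rather than a targeted one, and the paper's gluing construction is the idea that makes the extremal case manageable.
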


As a summary, we list in Table~\ref{Tab:N}
the total number $N_T(n)$ of 
inequivalent doubly even self-dual codes of length $n$ and
the number $N_d(n)$ of 
inequivalent doubly even self-dual codes of length 
$n$ ($n=8,16,\ldots,40$)
and minimum weight $d$ ($d=4,8$).

\begin{table}[th]
\caption{Number of doubly even self-dual codes}
\label{Tab:N}
\begin{center}
{\small
\begin{tabular}{c|c|c|c}
\noalign{\hrule height0.8pt}
Length $n$ & \multicolumn{1}{c|}{$N_T(n)$}  
& \multicolumn{1}{c|}{$N_4(n)$}  & \multicolumn{1}{c}{$N_8(n)$}  \\
\hline
 8 & 1    & 1    & - \\
16 & 2    & 2    & - \\
24 & 9    & 8    & 1 \\
32 & 85   & 80   & 5 \\
40 & 94343& 77873& 16470 \\
\noalign{\hrule height0.8pt}
  \end{tabular}
}
\end{center}
\end{table}

A classification of singly even
self-dual codes of lengths up to $36$ 
is known 
\cite{B06},
\cite{BR02},
\cite{CPS},
\cite{HM36},
\cite{Pless72}, 
\cite{PS75}.
As a consequence of Theorem \ref{thm:main}, 
we give a classification of extremal
singly even self-dual codes of length $38$.

Generator matrices of all inequivalent doubly even self-dual codes
of length $40$ and 
extremal self-dual codes of length $38$ 
can be obtained electronically from~\cite{Data}.
All computer calculations in this paper
were done by {\sc Magma}~\cite{Magma}.

\section{Classification method}\label{Sec:main}

In this section,
we describe how to complete a classification of
doubly even self-dual codes of length $40$.

\subsection{Preliminaries}
The weight enumerator of a doubly even self-dual code
of length $40$ can be written as:
\begin{multline}
1 
+ A_4 y^4 
+(285 + 24 A_4) y^8 
+(21280  + 92 A_4) y^{12} 
\\ 
+(239970  - 600 A_4) y^{16} 
+(525504  + 966 A_4) y^{20} 
+ \cdots + y^{40},
\label{we}
\end{multline}
where $A_w$ denotes the number of codewords of weight $w$
(see e.g.~\cite{MS73}).

The number of  distinct doubly even self-dual
codes of length $n$ is given \cite{MST} by the formula: 
\begin{equation}
\prod_{i=0}^{n/2-2}(2^{i}+1).
\end{equation}
King \cite{King} determined the 
number of  distinct extremal doubly even self-dual
codes of length $40$.
Let $N(40,d)$ denote the number of distinct doubly even self-dual
codes of length $40$ and minimum weight $d$ $(d=4,8)$.
Then we have
\begin{align*}
&N(40,4)=
4009357722800739726876619952910304312989584368968750,\\
&N(40,8) 
= 10263335567003567415076803513287627980544163840000000.
\end{align*}

\subsection{Minimum weight 4}

Let $C$ be a singly even self-dual code and
let $C_0$ denote the
subcode of codewords having weight $\equiv0\pmod4$.
Then $C_0$ is a subcode of codimension $1$.
The {\em shadow} $S$ of $C$ is defined to be 
$C_0^\perp \setminus C$ \cite{C-S}.
There are cosets $C_1,C_2,C_3$ of $C_0$ such that
$C_0^\perp = C_0 \cup C_1 \cup C_2 \cup C_3 $, where
$C = C_0  \cup C_2$ and $S = C_1 \cup C_3$.

\begin{proposition}[Brualdi and Pless \cite{BP}]\label{prop:BP}
Let $C$ be a self-dual code of length $n \equiv 6 \pmod 8$.
Let $C_0,C_1,C_2$ and $C_3$ be as above.
Then
\begin{multline*}
C^* =
\{(v,0,0)\mid v\in C_{0}\}\cup
\{(v,1,1)\mid v\in C_2\}
\\
\quad\cup
\{(v,1,0)\mid v\in C_{1}\}\cup
\{(v,0,1)\mid v\in C_{3}\}
\end{multline*}
is a doubly even self-dual code of length $n+2$.
\end{proposition}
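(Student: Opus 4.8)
The plan is to verify the three defining properties of $C^*$ in turn: that it is linear, that it is self-dual, and that it is doubly even. Throughout I identify $C^*$ with the set of vectors $(v,\phi(v))$, $v\in C_0^\perp$, where $\phi(v)\in\FF_2^2$ is the pair of appended bits prescribed by the coset of $C_0$ containing $v$; thus $\phi$ equals $(0,0),(1,0),(1,1),(0,1)$ on $C_0,C_1,C_2,C_3$ respectively. The first step is to record the group structure of $C_0^\perp/C_0$. Since $C=C^\perp\subseteq C_0^\perp$ has index $2$ and $C_0$ has index $2$ in $C$, the quotient $C_0^\perp/C_0$ is a Klein four-group in which $C_0$ is the identity, $C_2$ is the nontrivial coset lying inside $C$, and $C_1,C_3$ are the two cosets forming the shadow $S$; one checks $C_i+C_j=C_k$ whenever $\{i,j,k\}=\{1,2,3\}$. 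The assignment $\phi$ is then exactly a group isomorphism $C_0^\perp/C_0\to\FF_2^2$, so $(v,\phi(v))+(w,\phi(w))=(v+w,\phi(v+w))$ and $C^*$ is closed under addition. As $\dim C_0^\perp=n/2+1$, the code $C^*$ is linear of dimension $(n+2)/2$, the correct dimension for self-duality in length $n+2$.

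For self-duality it then suffices to prove self-orthogonality, i.e. $u\cdot v+\phi(u)\cdot\phi(v)\equiv 0\pmod 2$ for all $u,v\in C_0^\perp$. Because $C_0\subseteq (C_0^\perp)^\perp$, the form $[\bar u,\bar v]=u\cdot v$ descends to a well-defined symmetric $\FF_2$-bilinear form on $V=C_0^\perp/C_0$, and since the radical is $(C_0^\perp)^\perp/C_0=C_0/C_0$ it is nondegenerate. Here I would use the all-ones vector $\allone$: as $\wt(\allone)=n\equiv 2\pmod 4$ we have $\allone\in C_2$, so I may take $\bar x=\allone$ as one basis vector of $V$ and $\bar s$ (any shadow vector) as the other. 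Then $[\bar x,\bar x]=\wt(\allone)\bmod 2=0$, and nondegeneracy forces $[\bar x,\bar s]=1$, while $[\bar s,\bar s]=\wt(s)\bmod 2$. Matching these against the table of values $\phi(u)\cdot\phi(v)$ (which are $1$ on the pairs $(C_1,C_1),(C_1,C_2),(C_2,C_3),(C_3,C_3)$ and $0$ otherwise) shows that self-orthogonality holds precisely when $\wt(s)$ is odd on $S$.

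Double-evenness is the sharper requirement. Since $\wt(v+c)\equiv\wt(v)\pmod 4$ for $c\in C_0$ and $v\in C_0^\perp$, the residue $\wt\bmod 4$ is constant on each coset. The appended bits add $0,2,1,1$ to the weights of codewords coming from $C_0,C_2,C_1,C_3$, so $C^*$ is doubly even if and only if $\wt(s)\equiv 3\pmod 4$ on $C_1$ and on $C_3$. Thus the whole proposition rests on the single fact — which I expect to be the main obstacle — that every shadow vector has weight $\equiv n/2\equiv 3\pmod 4$.

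To establish this I would evaluate the Gauss sum $\sum_{v\in C_0^\perp}i^{\wt(v)}$ in two ways. Splitting over the four cosets and writing $a$ for the (coset-constant) residue $\wt\bmod 4$ on $C_1$, so that $C_3$ has residue $2-a$, the sum equals $2^{n/2-1}(i^{a}+i^{2-a})$. On the other hand, Poisson summation over $C_0$ applied to $f(v)=i^{\wt(v)}$ gives $\hat f(u)=2^{n/2}e^{i\pi n/4}i^{-\wt(u)}$, and since $C_0$ is doubly even the sum collapses to $2^{n/2}e^{i\pi n/4}$. Comparing the two evaluations yields $i^{a}+i^{2-a}=2e^{i\pi n/4}=-2i$ for $n\equiv 6\pmod 8$, which forces $a\equiv 3\pmod 4$; the complementary coset $C_3$ then has weight $2-a\equiv 3\pmod 4$ as well. (Alternatively one may simply invoke the known determination of shadow weights modulo $4$ from the theory of shadows.) With $\wt(s)\equiv 3\pmod 4$ in hand, the reductions above close: $C^*$ is self-orthogonal of dimension $(n+2)/2$, hence self-dual, and all of its weights are divisible by $4$, so it is a doubly even self-dual code of length $n+2$.
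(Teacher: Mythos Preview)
The paper does not give its own proof of this proposition; it is quoted as a result of Brualdi and Pless \cite{BP} and used as a black box. So there is nothing in the paper to compare against.

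Your argument is correct and complete. The reduction to the single fact $\wt(s)\equiv 3\pmod 4$ for $s\in S$ is clean, and your Gauss-sum verification of that fact is sound: the coset decomposition gives $2^{n/2-1}(i^a+i^{2-a})$ after the $C_0$ and $C_2$ contributions cancel, the Poisson/MacWilliams side gives $2^{n/2}e^{i\pi n/4}$, and for $n\equiv 6\pmod 8$ the resulting equation $i^a-i^{-a}=-2i$ forces $a\equiv 3$. Two small remarks on presentation: when you assert ``nondegeneracy forces $[\bar x,\bar s]=1$'' it is worth noting explicitly that $[\bar x,C_3]=[\bar x,C_1+C_2]=[\bar x,C_1]$, so if $[\bar x,\bar s]$ vanished for one shadow coset it would vanish for both and $\bar x$ would lie in the radical; and your Poisson step silently absorbs the normalisation factor $|C_0^\perp|/2^n\cdot|C_0|=1$, which is fine but could be stated. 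The parenthetical alternative---citing Conway--Sloane \cite{C-S} for the congruence on shadow weights---is exactly how most authors (including Brualdi and Pless themselves) would dispatch this, and is entirely acceptable in place of the Gauss-sum calculation.
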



There are $519492$ inequivalent self-dual codes of 
length $36$ \cite{HM36}.
By considering the direct sum of 
the unique self-dual code of length $2$ and 
each of these codes,
we have $519492$ self-dual codes of length $38$ and minimum weight $2$.
By Proposition \ref{prop:BP},
$519492$ doubly even self-dual codes of length $40$
and minimum weight $4$ are constructed.


We examine the equivalence
or inequivalence of codes as follows.
Let $C$ be a doubly even self-dual code of length $40$
and minimum weight $d$ ($d=4,8$).
Let $M(C)$ be the $A_8 \times 40$ matrix with rows composed of the 
codewords of weight $8$ 
in $C$, where the $(1,0)$-matrix $M(C)$ is regarded as a matrix over $\ZZ$.
We define
\[
N(C)=
\left\{\begin{array}{ll}
\{n_{ij} \mid 1\le i, j \le 40\} \setminus \{57\}
& \text{ if $C$ is extremal,}\\
\{n_{ij} \mid 1\le i, j \le 40\} & \text{ otherwise,}
\end{array}\right.
\]
where $n_{ij}$ is the $(i,j)$-entry of $M(C)^T M(C)$,
and $M(C)^T$ denotes the transposed matrix of $M(C)$.
The codewords of weight $w$ in $C$
are calculated by the {\sc Magma} function {\tt Words}. 
Note that the codewords of weight $8$ in $C$
form a $1$-$(40,8,57)$ design when $C$ is extremal.
This means that $n_{ii}=57$ for any $i$ $(i=1,2,\ldots,40)$
and $\max \{n_{ij} \mid 1\le i, j \le 40\}=57$
when $C$ is extremal.
Then we consider the following:
\[
\alpha(C)=(
\#\Aut(C),
A_4,
\max N(C),
\min N(C),
\#N(C)).
\]
The automorphism group $\Aut(C)$ 
of the code $C$
is calculated by the {\sc Magma} function {\tt AutomorphismGroup}. 
Of course, $C$ and $C'$ are inequivalent if $\alpha(C)\ne \alpha(C')$.
For a given set of codes, we divided into classes
where each class contains codes $C$ with identical $\alpha(C)$.  
Then we divided the codes in each class
into equivalence classes.
This was done by the {\sc Magma} function {\tt IsIsomorphic}.

In this way, 
we checked equivalences among the above
$519492$ doubly even self-dual codes of length $40$
and minimum weight $4$. 
Then we obtained the set 
$\mathcal{C}_{40,4}$ of $77873$
inequivalent doubly even self-dual codes with minimum weight $4$
satisfying
\begin{equation}\label{eq:d4}
\sum_{C \in \mathcal{C}_{40,4}}
\frac{40!}{\#\Aut(C)} = N(40,4).
\end{equation}
This shows that there is no other doubly even self-dual code
of length $40$ and minimum weight $4$.
The numbers $N(A_4)$ 
of doubly even self-dual codes of length $40$
containing $A_4$ codewords of weight $4$
are listed in Table~\ref{Tab:40}.

\begin{table}[thb]
\caption{Number of doubly even self-dual codes of length $40$}
\label{Tab:40}
\begin{center}
{\small
\begin{tabular}{c|c|c|c|c}
\noalign{\hrule height0.8pt}
\multicolumn{5}{c}{$(A_4,N(A_4))$}\\
\hline
$( 0,16470)$&$(13, 382)$&$(26, 47)$&$(40, 12)$&$( 64,  3)$\\
$( 1,20034)$&$(14, 374)$&$(27, 16)$&$(41,  1)$&$( 66,  1)$\\
$( 2,17276)$&$(15, 231)$&$(28, 38)$&$(42,  9)$&$( 70,  3)$\\
$( 3,12168)$&$(16, 236)$&$(29, 13)$&$(43,  3)$&$( 72,  1)$\\
$( 4, 8471)$&$(17, 143)$&$(30, 29)$&$(44,  7)$&$( 74,  1)$\\
$( 5, 5552)$&$(18, 160)$&$(31,  7)$&$(46,  7)$&$( 78,  1)$\\
$( 6, 3916)$&$(19, 100)$&$(32, 22)$&$(48,  4)$&$( 90,  1)$\\
$( 7, 2610)$&$(20, 104)$&$(33,  3)$&$(50,  4)$&$( 92,  1)$\\
$( 8, 1932)$&$(21,  54)$&$(34, 25)$&$(52,  6)$&$( 94,  2)$\\
$( 9, 1243)$&$(22,  90)$&$(35,  3)$&$(54,  2)$&$(106,  1)$\\
$(10, 1093)$&$(23,  37)$&$(36, 11)$&$(56,  1)$&$(134,  1)$\\
$(11,  669)$&$(24,  59)$&$(37,  4)$&$(58,  4)$&$(190,  1)$\\
$(12,  605)$&$(25,  26)$&$(38, 11)$&$(62,  2)$& \\
\noalign{\hrule height0.8pt}
  \end{tabular}
}
\end{center}
\end{table}

\subsection{Minimum weight 8}\label{subsec:mw8}

For a set of coordinates $I\subset\{1,2,\ldots,n\}$,
let $\pi:\FF_2^n\rightarrow\FF_2^t$,
$\pi':\FF_2^n\rightarrow\FF_2^{n-t}$
be the projection to the set of coordinates $I$,
$I'$, respectively,
where $I'=\{1,\ldots,n\}\setminus I$ and $\# I=t$.
For a code $C$ of length $n$, the {\em punctured code} and 
the {\em shortened code} of $C$ on the set of coordinates $I$
are the codes
$\pi'(C)$ and $\{\pi'(c)\mid c\in C, \; \pi(c)=\allzero\}$, 
respectively, where $\allzero$ denotes the zero vector.

If $C$ is a doubly even code containing the all-one vector
$\allone$, then
we denote by $q_C:C^\perp/C\to\FF_2$ the map defined by
$q_C(x+C)=\frac{\wt(x)}{2}\bmod2$, where $\wt(x)$ denotes
the weight of $x$. It is easy to verify that
the map $q_C$ is well-defined.

Let $C_i$ be a doubly even code of length $n_i$
containing $\allone$, for $i=1,2$.
A bijective linear map
\begin{equation}\label{eq:isometric}
f:C_1^\perp/C_1\to C_2^\perp/C_2
\end{equation}
is called an {\em isometry} if
$q_{C_1}=q_{C_2}\circ f$.
The set of isometries (\ref{eq:isometric}) is denoted by
$\Phi(C_1,C_2)$. Note that an isometry 
exists only if $n_1-n_2=2(\dim C_1-\dim C_2)$ and $n_1\equiv n_2
\pmod8$.
For an isometry $f\in\Phi(C_1,C_2)$, we define a code
\begin{equation}\label{eq:consta}
D(C_1, C_2, f)=\{(x_1,x_2)\mid x_1\in C_1^\perp,\;x_2\in f(x_1+C_1)\}
\subset\FF_2^{n_1+n_2},
\end{equation}
It is easy to see that $D(C_1, C_2, f)$ is a doubly
even self-dual code.
Conversely, every doubly even self-dual code of length
$n_1+n_2$ containing a codeword of weight $n_1$ can be
constructed by this method. Indeed, let $x$ be a codeword
of weight $n_1$ in a doubly even self-dual code $C$ of
length $n_1+n_2$. Let $C_2$ (resp.\ $C_1$)
be the shortened code of $C$ on the support (resp.\
the complement of the support) of $x$. Then $C_1$
and $C_2$ are doubly even codes. Moreover, $C_2^\perp$
(resp.\ $C_1^\perp$)
is the punctured code of $C$ on the support (resp.\
the complement of the support) of $x$.
Let $\pi$ and $\pi'$ denote
the projections onto the support of $x$ and the complement
of the support of $x$, respectively.
We define $f:C_1^\perp/C_1\to C_2^\perp/C_2$ by
$f(x_1+C_1)=\pi'(x)+C_2$, where $x$ is a codeword
of $C$ satisfying $\pi(x)=x_1$. Then
$D(C_1,C_2,f)$ is equivalent to $C$.

For fixed codes $C_1$, $C_2$, the resulting code
$D(C_1, C_2, f)$
depends on the choice of an isometry $f$.
However, some of these codes are equivalent to each other.
We will give a sufficient condition
for two resulting codes to be equivalent.
We need this criterion to reduce the amount of 
calculation to be reasonable.

First, we define some groups.
For a doubly even code $C$ containing $\allone$,
we denote by  $\cG_0(C)$ the subgroup of $\GL(C^\perp/C)$
induced by the action of $\Aut(C)$ on the linear space $C^\perp/C$ and
denote by $\cG_1(C)$ the subgroup 
$\Phi(C,C)$ of $\GL(C^\perp/C)$.
By the definition, the group $\cG_0(C)$ is a subgroup of $\cG_1(C)$.
If we replace $f$ by $\sigma_2\circ f\circ \sigma_1$,
where $\sigma_i\in\cG_0(C_i)$, then the resulting codes
are equivalent, that is,
\[
D(C_1, C_2, f)\cong D(C_1, C_2, \sigma_2\circ f\circ \sigma_1).
\]
This means that, in order to enumerate the set of codes
$\{D(C_1, C_2, h)\mid h\in\Phi(C_1, C_2)\}$
up to equivalence,
we first fix $f\in\Phi(C_1, C_2)$, and
it suffices to enumerate the codes
$D(C_1, C_2, f \circ g)$,
where $g$ runs through a set of representatives for the double cosets
\[
(f^{-1}\circ\cG_0(C_2)\circ f)\backslash\cG_1(C_1)/\cG_0(C_1).
\]

We now apply this method with $(n_1,n_2)=(16,24)$
in order to classify extremal doubly even self-dual codes of length $40$.
Note that from the weight enumerator (\ref{we}),
such a code has a codeword of weight $16$.
This means that every extremal doubly even self-dual code of length $40$
is equivalent to $D(C_1,C_2,f)$ for some
doubly even code $C_1$ of length $16$ containing $\allone$, 
some doubly even code $C_2$ of length $24$ containing $\allone$, 
and $f\in\Phi(C_1,C_2)$.
All doubly even codes of lengths $16$ and $24$ can be found
in \cite{Miller}.

However, if $\dim C_1\leq2$, then the degree of
$\cG_1(C_1)\subset\GL(C_1^\perp/C_1)$
as a permutation group is too large to perform the double coset
enumeration,
so we only enumerated codes $D(C_1,C_2,f)$, where
$C_1$ is a doubly even code of length $16$ with $\dim C_1\geq3$.
Here, the group $\cG_1(C_1)$ was
constructed by the {\sc Magma} function {\tt GOPlus}, and
the double coset enumeration was  performed using the
{\sc Magma} function {\tt DoubleCosetRepresentatives}.
Then we classified the resulting codes using
the method described in the previous subsection.
In this way,
we obtained a set of pairwise inequivalent $16468$
extremal doubly even self-dual codes of length $40$. 
It turns out that there
are two other codes. One is the code with automorphism group
of order $6840$ constructed in \cite{Yo}.
The other is the code 
$H^{(1234)}B'_6$ in the notation of \cite{YZ}
and this code has automorphism group of order $120$.
In this way,
we obtained the set 
$\mathcal{C}_{40,8}$ of $16470$
inequivalent extremal doubly even self-dual codes
satisfying
\begin{equation}\label{eq:d8}
\sum_{C \in \mathcal{C}_{40,8}}
\frac{40!}{\#\Aut(C)} = N(40,8).
\end{equation}
From (\ref{eq:d4}) and (\ref{eq:d8}),
it follows that there is no other doubly even self-dual code
of length $40$.
This explains the number $N(0)$
of extremal doubly even self-dual codes of length $40$
listed in Table~\ref{Tab:40}.
Therefore, we have Theorem \ref{thm:main}.

\section{Some properties}
In this section, we give some properties of doubly even self-dual
codes of length $40$.

The covering radius of a code $C$ of length $n$ 
is the smallest integer $R$ such that spheres of radius $R$ 
around codewords of $C$ cover the space $\FF_2^n$.
It is known that 
the covering radius is the same as the largest value
among weights of cosets.
Here, the weight of a coset is the smallest weight of 
a vector in the coset.
The covering radius is a basic and important geometric parameter of 
a code.
Assmus and Pless \cite{A-P} began the study of the covering radii of 
(extremal) doubly even self-dual codes.

Let $R_{40,d}$ be the covering radius of a doubly 
even self-dual code of length $40$ and minimum weight $d$
$(d=4,8)$.
Then, by the sphere-covering bound and the Delsarte 
bound (see \cite{A-P}),
$6 \le R_{40,8} \le 8$ and $6 \le R_{40,4} \le 10$.
In Table \ref{Tab:CR},
we list the numbers $N(d,R)$ of 
doubly even self-dual codes with minimum weight $d$ and
covering radius $R$.
This was calculated by the {\sc Magma} function {\tt CoveringRadius}. 
From the above calculation, 
we have the following:

\begin{proposition}
There is no doubly even self-dual code of length $40$ 
with covering radius $6$.
\end{proposition}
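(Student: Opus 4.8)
The plan is to deduce the proposition directly from the complete classification established in Theorem~\ref{thm:main}, using the fact that the covering radius is invariant under code equivalence. Since equivalent codes have the same covering radius, it suffices to examine one representative from each of the $94343$ equivalence classes rather than all distinct codes. For each representative I would compute the covering radius with the {\sc Magma} function {\tt CoveringRadius}, sort the outcomes into the counts $N(d,R)$ displayed in Table~\ref{Tab:CR}, and then read off the proposition as the statement that both $N(4,6)$ and $N(8,6)$ vanish, i.e.\ the value $6$ never occurs.

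A useful reduction is worth recording. The sphere-covering bound already gives $R_{40,d}\ge6$, so ruling out covering radius $6$ is the same as certifying $R_{40,d}\ge7$ for every code, and this does not require determining the covering radius exactly. For each of the $94343$ representatives it is enough to exhibit a single coset of weight at least $7$, that is, a vector $v\in\FF_2^{40}$ whose minimum distance to the code exceeds $6$. One can search among low-weight or random vectors, evaluate the weight of the leader of the coset $v+C$, and stop as soon as a value $\ge7$ is found; each such certificate already forces $R_{40,d}\ge7$ and hence $R_{40,d}\neq6$, without the more costly global maximization.

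The main obstacle is computational rather than conceptual: evaluating the exact covering radius of a $[40,20]$ code is expensive, since it amounts to maximizing the leader weight over all $2^{20}$ cosets, and this has to be repeated across the whole list of representatives. I would mitigate this in two ways. First, I would exploit the invariants already assembled during the classification, such as $\#\Aut(C)$ and the data recorded in $\alpha(C)$, to group codes and to reuse a covering-radius value across families that arise from a common construction. Second, for the specific purpose of this proposition I would fall back on the certificate approach of the previous paragraph, which needs only one deep coset per code and thereby avoids the full maximization. A verified coset of weight $\ge7$ for each representative shows $R_{40,d}\ge7$, which completes the argument.
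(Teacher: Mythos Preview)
Your proposal is correct and matches the paper's approach: the paper also derives the proposition directly from the classification in Theorem~\ref{thm:main} by computing the covering radius of every representative with the {\sc Magma} function {\tt CoveringRadius} and reading off $N(4,6)=N(8,6)=0$ from Table~\ref{Tab:CR}. Your additional remark about certifying $R\ge7$ via a single deep coset is a sensible optimization, but the underlying argument is the same computational verification over the full list of representatives.
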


\begin{remark}
In \cite{HO}, based on a preprint by Michio Ozeki,
the non-existence of an extremal doubly even self-dual code with
covering radius $6$ was announced.
However, unfortunately, his preprint contained an error and,
in his paper \cite{Ozeki-S} he withdrew the above announcement.
From the above calculation, the non-existence of an extremal 
doubly even self-dual code with
covering radius $6$ was verified.
\end{remark}

\begin{remark}
The two extremal doubly even self-dual 
codes with covering radius $7$ can be found 
in \cite{HMT} and \cite{HO}.
\end{remark}

\begin{table}[tbh]
\caption{Covering radii of doubly even self-dual codes}
\label{Tab:CR}
\begin{center}
{\small
\begin{tabular}{c|c|c}
\noalign{\hrule height0.8pt}
$R$   &$N(4,R)$ & $N(8,R)$ \\
\hline
 6 &    0 &       0 \\
 7 &   23 &       2 \\
 8 &76768 &   16468 \\
 9 &  954 &       - \\
10 &  128 &       - \\
\noalign{\hrule height0.8pt}
  \end{tabular}
}
\end{center}
\end{table}
Now we give some properties of extremal doubly even self-dual
codes of length $40$.
Let $\sigma$ be an automorphism of odd prime order $p$.
If $\sigma$ has $c$ independent $p$-cycles and $f$ fixed points,
then $\sigma$ is said to be of type $p$-$(c,f)$.
All extremal doubly even self-dual codes of length $40$ 
with automorphism of type $p$-$(c,f)$ are known for
$p \ge 5$ (see \cite[Table 3]{Huffman05}).
The cases with $(p,c)=(3,6)$ and $(3,8)$ were considered 
in \cite{B04} and \cite{Kim10}, respectively.
The numbers $N(p,c)$ of inequivalent extremal doubly
even self-dual codes with automorphism of type $p$-$(c,f)$
are listed in Table \ref{Tab:pcf}
for $(p,c)=(3,6), (3,10)$ and $(3,12)$.
It is claimed in \cite[Theorem 12]{B04} that $N(3,6)=16$.
However, we verified that $N(3,6)=17$.
Since the list of the $16$ codes is not available,
we are unable to compare the result with ours.

\begin{proposition}
There are $17$, $70$ and $322$
inequivalent extremal doubly even self-dual codes of length $40$ 
with automorphism of types
$3$-$(6,22)$, $3$-$(10,10)$ and $3$-$(12,4)$, respectively.
\end{proposition}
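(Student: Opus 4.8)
The plan is to classify all extremal doubly even self-dual codes of length $40$ admitting an automorphism $\sigma$ of order $3$ with the prescribed cycle structure, by reducing the problem to two ingredients via the standard decomposition of a code under a cyclic automorphism of odd prime order. Specifically, for an automorphism $\sigma$ of type $3$-$(c,f)$ acting on $C$, one decomposes $C=C(\sigma)\oplus E(\sigma)$, where $C(\sigma)$ is the subcode fixed pointwise by $\sigma$ (supported on the fixed points and constant on each $3$-cycle) and $E(\sigma)$ is the subcode of codewords whose support meets each cycle in an even number of coordinates. The fixed code $C(\sigma)$ projects to a binary self-dual-type code of length $c+f$, while $E(\sigma)$ carries a module structure over the ring $\FF_2[x]/(x^2+x+1)\cong\FF_4$, so that $E(\sigma)$ becomes a Hermitian self-dual $\FF_4$-code of length $c$. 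This is exactly the framework developed by Huffman and used in the references \cite{B04}, \cite{Kim10}, \cite{Huffman05} that the paper cites.

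First I would, for each of the three types $3$-$(6,22)$, $3$-$(10,10)$, $3$-$(12,4)$, enumerate the possible fixed codes $C(\sigma)$ and the possible $\FF_4$-components $E(\sigma)$ subject to the self-duality and doubly even constraints together with the extremality requirement (minimum weight $8$ and the forced weight enumerator \eqref{we}). For each admissible pair one reconstructs all candidate codes $C$ by letting $\sigma$ act with the given cycle structure, running over the finitely many ways of gluing the two components; this is a finite computation because the relevant $\FF_4$-codes of length $c\in\{6,10,12\}$ and the binary fixed codes of length $c+f$ are available from existing classifications or can be generated directly. Then I would cut the list down to extremal codes, and finally remove equivalences using the same invariant-based procedure already described in Section~\ref{Sec:main}: compute $\alpha(C)$ for each code, partition into classes of equal invariant, and resolve each class with the {\sc Magma} function {\tt IsIsomorphic}. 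The counts $17$, $70$, $322$ emerge as the sizes of the resulting equivalence-class sets.

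As an independent check, and the part I expect to be the main obstacle, I would verify that these counts are consistent with the global classification already established in Theorem~\ref{thm:main}. Since every extremal code with such an automorphism appears in the set $\mathcal{C}_{40,8}$ of $16470$ codes, one can simply scan $\mathcal{C}_{40,8}$, compute $\Aut(C)$ (already available), and for each code test whether $\Aut(C)$ contains an element of order $3$ with the specified number of $3$-cycles and fixed points. This gives the numbers $N(3,6)$, $N(3,10)$, $N(3,12)$ directly and confirms the module-theoretic enumeration. The delicate point is that the decomposition method and the global scan must agree exactly, and in particular the discrepancy with \cite[Theorem~12]{B04}, which claims $N(3,6)=16$, must be resolved in favour of $17$; because the list of $16$ codes from \cite{B04} is not available for comparison, the reliability of our count rests entirely on the fact that it is extracted from the complete and self-consistent list $\mathcal{C}_{40,8}$, whose total mass matches $N(40,8)$ through \eqref{eq:d8}.
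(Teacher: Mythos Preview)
Your proposal is correct, but the paper takes a much simpler route than your primary plan. The paper does not carry out the Huffman decomposition $C=C(\sigma)\oplus E(\sigma)$ at all for these cases; instead it relies entirely on what you describe as the ``independent check'': having already produced the complete, mass-verified list $\mathcal{C}_{40,8}$ of $16470$ extremal codes, it simply inspects the automorphism group of each code (already computed for Table~\ref{Tab:Aut}) and counts those containing an element of order $3$ with $6$, $10$, or $12$ cycles. No separate construction or gluing argument is performed.

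Your decomposition approach would also work and is the natural method when a full classification is not available, which is why \cite{B04} and \cite{Kim10} proceeded that way for $(3,6)$ and $(3,8)$. Its advantage is that it is self-contained and does not presuppose Theorem~\ref{thm:main}; its cost is the substantial bookkeeping of enumerating fixed codes, Hermitian self-dual $\FF_4$-codes, and gluings. The paper's approach trades that work for the one-time effort of the global classification, after which the proposition is a trivial database query. Your observation about the discrepancy with \cite{B04} is handled exactly as you say: the mass formula~\eqref{eq:d8} certifies $\mathcal{C}_{40,8}$, so the extracted count $N(3,6)=17$ is reliable even without access to the list in \cite{B04}.
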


\begin{table}[thbp]
\caption{$N(p,c)$ for $(p,c)=(3,6), (3,10)$ and $(3,12)$}
\label{Tab:pcf}
\begin{center}
{\small
\begin{tabular}{c|c|c|c}
\noalign{\hrule height0.8pt}
$(p,c)$ & $(3,6)$ & $(3,10)$ & $(3,12)$ \\
\hline
$N(p,c)$ & 17 & 70 & 322 \\
\noalign{\hrule height0.8pt}
  \end{tabular}
}
\end{center}
\end{table}
In Table \ref{Tab:Aut}, we list the numbers $N(\#\Aut)$ of
extremal doubly even self-dual codes with automorphism groups of
order $\#\Aut$.

\begin{table}[thb]
\caption{Orders of automorphism groups}
\label{Tab:Aut}
\begin{center}
{\small
\begin{tabular}{c|c|c|c|c}
\noalign{\hrule height0.8pt}
\multicolumn{5}{c}{$(\#\Aut,N(\#\Aut))$}\\
\hline
$(1,10400)$&$(36,1   )$&$(256,21 )$&$(3072,3 )$&$(61440,1)$   \\
$(2,3538 )$&$(38,1   )$&$(288,4  )$&$(3840,1 )$&$(65536,1)$   \\
$(3,43   )$&$(40,5   )$&$(320,1  )$&$(4096,1 )$&$(110592,1)$  \\
$(4,1189 )$&$(48,34  )$&$(384,12 )$&$(4608,2 )$&$(147456,1)$  \\
$(5,2    )$&$(60,2   )$&$(512,16 )$&$(5376,1 )$&$(245760,1)$  \\
$(6,68   )$&$(64,75  )$&$(576,3  )$&$(6144,7 )$&$(737280,1)$  \\
$(8,459  )$&$(72,4   )$&$(720,2  )$&$(6840,1 )$&$(786432,1)$  \\
$(10,8   )$&$(96,12  )$&$(768,7  )$&$(9216,1 )$&$(983040,1)$  \\
$(12,80  )$&$(114,1  )$&$(1024,3 )$&$(12288,2)$&$(1474560,1)$ \\
$(16,233 )$&$(120,5  )$&$(1296,1 )$&$(16384,1)$&$(5505024,1)$ \\
$(18,1   )$&$(128,46 )$&$(1536,10)$&$(18432,1)$&$(8257536,1)$ \\
$(20,4   )$&$(144,4  )$&$(1728,1 )$&$(20480,1)$&$(44236800,1)$\\
$(24,41  )$&$(160,1  )$&$(1920,1 )$&$(20736,1)$&$(82575360,1)$\\
$(30,2   )$&$(192,12 )$&$(2048,4 )$&$(32768,1)$& \\
$(32,70  )$&$(240,2  )$&$(2688,1 )$&$(49152,3)$& \\
\noalign{\hrule height0.8pt}
  \end{tabular}
}
\end{center}
\end{table}


As we mentioned at the end of Subsection~\ref{subsec:mw8},
we have the following:

\begin{proposition}\label{prop:dimCx}
Let $C_x$ denote the shortened code of $C$ on the complement of
the support of a codeword $x$. Then there are two inequivalent
extremal doubly even self-dual codes $C$ of length $40$ 
such that $\dim C_x\leq 2$ for all $x\in C$ with $\wt(x)=16$.
\end{proposition}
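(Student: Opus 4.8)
The plan is to prove this as a by-product of the classification algorithm described in Subsection~\ref{subsec:mw8}, rather than by an independent argument. Recall that the enumeration there proceeded by constructing every extremal doubly even self-dual code of length $40$ in the form $D(C_1,C_2,f)$, exploiting the fact that such a code always contains a codeword of weight $16$; the construction was restricted to doubly even codes $C_1$ of length $16$ with $\dim C_1\geq3$, because when $\dim C_1\leq2$ the group $\cG_1(C_1)$ acts on $C_1^\perp/C_1$ as a permutation group of unmanageable degree. In the notation of Proposition~\ref{prop:dimCx}, $C_x$ is precisely the shortened code $C_1$ associated to the codeword $x$ of weight $16$. So the statement amounts to identifying exactly those extremal codes $C$ for which every weight-$16$ codeword yields a shortened code of dimension at most $2$, i.e.\ the codes that the restricted enumeration could in principle have missed.

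The key steps are as follows. First I would observe that, since we have already produced the complete list $\mathcal{C}_{40,8}$ of $16470$ inequivalent extremal codes (and verified completeness via the mass formula~(\ref{eq:d8})), the property ``$\dim C_x\leq2$ for all $x\in C$ with $\wt(x)=16$'' is a well-defined, equivalence-invariant predicate that can be checked on each member of the list. Concretely, for each $C\in\mathcal{C}_{40,8}$ I would enumerate the weight-$16$ codewords using the {\sc Magma} function {\tt Words}, and for each such $x$ form the shortened code $C_x$ on the complement of the support of $x$ and record its dimension; the code $C$ satisfies the hypothesis precisely when the maximum of these dimensions is at most $2$. Running this test over the whole list isolates the codes in question.

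The second step is to confirm that exactly two codes survive this test. This is really the consistency check that validates the restricted enumeration: the two surviving codes are the two codes that were added by hand at the end of Subsection~\ref{subsec:mw8}, namely the code of \cite{Yo} with automorphism group of order $6840$ and the code $H^{(1234)}B'_6$ of \cite{YZ} with automorphism group of order $120$. For these two, every weight-$16$ codeword has $\dim C_x\leq2$, which is exactly why they did not arise from the $\dim C_1\geq3$ construction and had to be supplied separately; for every other extremal code some weight-$16$ codeword gives $\dim C_x\geq3$, so it was captured by the main enumeration. I would present the two surviving codes explicitly (or by reference to the generator matrices in~\cite{Data}) so that the count of $2$ is verifiable.

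The main obstacle is not conceptual but one of logical dependence: the proof is only as strong as the completeness of $\mathcal{C}_{40,8}$, so I must make clear that Proposition~\ref{prop:dimCx} is being asserted relative to the already-established Theorem~\ref{thm:main}. In particular, the appearance of precisely these two exceptional codes is not an independent coincidence but the mirror image of the gap deliberately left by restricting to $\dim C_1\geq3$; the delicate point is to phrase the argument so that it reads as a verification of consistency (the missing codes are exactly the hand-added ones) rather than as a circular re-derivation. I do not expect any heavy computation beyond the per-code dimension scan, which is routine given that the weight-$16$ codewords and shortened codes are directly accessible in {\sc Magma}.
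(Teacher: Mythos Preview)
Your proposal is correct and matches the paper's approach: the paper likewise offers no independent argument, stating explicitly that Proposition~\ref{prop:dimCx} follows from what was ``mentioned at the end of Subsection~\ref{subsec:mw8}'' and that the authors ``have not been able to prove Proposition~\ref{prop:dimCx} directly, without classifying all extremal doubly even self-dual codes of length $40$.'' Your identification of the two exceptional codes with the hand-added codes from \cite{Yo} and \cite{YZ}, and your observation that the logical dependence runs through Theorem~\ref{thm:main}, are exactly the content of the paper's justification.
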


Although the condition given in Proposition~\ref{prop:dimCx} 
can be characterized
by the vanishing of a coefficient in the weight enumerator of
genus $3$ (see \cite{Runge}), we have not been able to prove
Proposition~\ref{prop:dimCx} directly, without classifying all extremal doubly even
self-dual codes of length $40$.


In Table \ref{Tab:rank}, we list the numbers $N(\dim)$ of 
extremal doubly even self-dual codes such that subcodes
generated by codewords of weight $8$
have dimension $\dim$.
The dimension is the same as the $2$-rank of
the $1$-$(40,8,57)$ design formed by
the codewords of weight $8$.

\begin{table}[thbp]
\caption{Dimensions of subcodes generated by codewords of weight $8$}
\label{Tab:rank}
\begin{center}
{\small
\begin{tabular}{c|c|c|c|c}
\noalign{\hrule height0.8pt}
$\dim$ & 17 & 18 & 19 & 20 \\
\hline
$N(\dim)$ & 5 & 1 & 14& 16450 \\
\noalign{\hrule height0.8pt}
  \end{tabular}
}
\end{center}
\end{table}

\section{Extremal self-dual codes of length 38}

Let $D$ be a doubly even self-dual code of length $40$.
Let $C$ be the code obtained from $D$ for which
some particular pair of coordinates $i,j$ are $00$
and $11$ and deleting these coordinates.
Then $C$ is a self-dual code of length $38$.
Here, we say that $C$ is obtained from $D$ by subtracting coordinates
$i,j$.
In addition, 
any self-dual code of length $38$ is obtained from
some doubly even self-dual code of length $40$ 
by subtracting some two coordinates
(see \cite{CPS}).
Due to the computational complexity,
we only completed a classification of extremal
self-dual codes of length $38$.
Note that there are at least $13644433$
inequivalent self-dual codes of
length $38$ \cite{HM36}.

Any extremal self-dual code $C$ of length $38$ and
its shadow $S$ have one of the following weight enumerators 
\cite{C-S}:
\begin{align}
\label{eq:WE1}
&\left\{\begin{array}{cl}
W_C=&
1 + 171 y^8 + 1862 y^{10} + 10374 y^{12} + 36765 y^{14} + 84759 y^{16} 
\\ &
+ 128212 y^{18} + \cdots,
\\
W_S=&
114 y^7 + 9044 y^{11} + 118446 y^{15} + 269080 y^{19} + \cdots,
\\
\end{array}\right.
\\
&
\label{eq:WE2}
\left\{\begin{array}{cl}
W_C=&
1 + 203 y^8 + 1702 y^{10} + 10598 y^{12} + 36925 y^{14} + 84055 y^{16} 
\\&
+ 128660 y^{18} + \cdots,
\\
W_S=&
y^3 + 106 y^7 + 9072 y^{11} + 118390 y^{15} + 269150 y^{19} +\cdots.
\\
\end{array}\right.
\end{align}

Although the following two lemmas are somewhat trivial, 
it is useful in finding extremal self-dual codes of length $38$.

\begin{lemma}
Any extremal self-dual code of length $38$ with
weight enumerator (\ref{eq:WE1}) (resp.\ (\ref{eq:WE2}))
is obtained from
some extremal doubly even self-dual code of length $40$ 
(resp.\ some doubly even self-dual code of length $40$ 
containing one codeword of weight $4$)
by subtracting some two coordinates.
\end{lemma}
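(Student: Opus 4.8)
The plan is to invert the subtraction operation using the Brualdi--Pless construction of Proposition~\ref{prop:BP}. Given an extremal self-dual code $C$ of length $38$, write $C_0$ for its doubly even subcode and $C_1,C_2,C_3$ for the three nontrivial cosets in $C_0^\perp$, arranged as in the paragraph preceding Proposition~\ref{prop:BP} so that $C=C_0\cup C_2$ and the shadow is $S=C_1\cup C_3$. Since $38\equiv6\pmod8$, Proposition~\ref{prop:BP} produces a doubly even self-dual code $D:=C^*$ of length $40$. First I would check that subtracting the two appended coordinates of $D$ returns $C$: the codewords of $D$ having $00$ or $11$ in those coordinates are exactly $\{(v,0,0)\mid v\in C_0\}$ and $\{(v,1,1)\mid v\in C_2\}$, and deleting the coordinates recovers $C_0\cup C_2=C$. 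Thus $C$ is obtained from $D$ by subtracting two coordinates, and it remains only to determine $A_4$ for $D$.

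Next I would compute the weight enumerator $W_D$ directly from the construction. Writing $W_{C_i}$ for the enumerator of $C_i$, the weights in $D$ are $\wt(v)$ on $C_0$, $\wt(v)+2$ on $C_2$, and $\wt(v)+1$ on both $C_1$ and $C_3$, so that
\[
W_D(y)=W_{C_0}(y)+y^2W_{C_2}(y)+y\bigl(W_{C_1}(y)+W_{C_3}(y)\bigr)=W_{C_0}(y)+y^2W_{C_2}(y)+y\,W_S(y).
\]
The key observation is that only the full shadow enumerator $W_S=W_{C_1}+W_{C_3}$ enters, so the splitting of $S$ into $C_1$ and $C_3$---and hence the labelling ambiguity in Proposition~\ref{prop:BP}---is irrelevant. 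Moreover $W_{C_0}$ and $W_{C_2}$ are read off from $W_C$ as its parts supported on exponents $\equiv0$ and $\equiv2\pmod4$, so every ingredient of $W_D$ is available from the known enumerators (\ref{eq:WE1}) and (\ref{eq:WE2}).

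Finally I would read off the coefficient $A_4$ of $y^4$ in $W_D$ and compare with (\ref{we}). In case (\ref{eq:WE1}) the term $W_{C_0}$ starts at $y^8$, the term $y^2W_{C_2}$ at $y^{12}$, and $y\,W_S$ at $y\cdot114y^{7}=114y^{8}$; hence $A_4=0$ and the $y^8$ coefficient is $171+114=285$, so $D$ has minimum weight $8$ and is extremal. In case (\ref{eq:WE2}) the shadow contributes $y\cdot y^{3}=y^{4}$, giving $A_4=1$ (and $y^8$ coefficient $203+106=309=285+24$), so $D$ is a doubly even self-dual code of length $40$ containing exactly one codeword of weight $4$. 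This yields the two assertions.

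Since every step is a short verification, I do not expect a genuine obstacle; the only points needing care are confirming that $C^*$ is inverse to subtraction and noting that the individual enumerators of $C_1$ and $C_3$ never appear. It is precisely the latter fact that keeps the computation elementary and makes the lemma, as remarked in the text, essentially a bookkeeping exercise.
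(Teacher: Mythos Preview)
Your proposal is correct and follows essentially the same approach as the paper's own proof: apply Proposition~\ref{prop:BP} to build $C^*$, observe that subtracting the two appended coordinates recovers $C$, and read off $A_4$ for $C^*$ from the given weight and shadow enumerators. The paper's argument is terser---it simply states that (\ref{eq:WE1}) (resp.\ (\ref{eq:WE2})) forces $C^*$ to be extremal (resp.\ to have one weight-$4$ codeword)---while you spell out the formula $W_D=W_{C_0}+y^2W_{C_2}+yW_S$ and the explicit coefficient checks, but the underlying idea is identical.
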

\begin{proof}
Let $C$ be an  extremal self-dual code of length $38$ with
weight enumerator (\ref{eq:WE1}) (resp.\ (\ref{eq:WE2})).
By Proposition \ref{prop:BP},
a doubly even self-dual code $C^*$ of length $40$ is
constructed.
In addition, by (\ref{eq:WE1}) (resp.\ (\ref{eq:WE2})),
$C^*$ is extremal (resp.\ $C^*$ contains one codeword of
weight $4$).
The code $C$ is obtained from $C^*$ by subtracting the last two coordinates.
The result follows.
\end{proof}

For the remainder of this section, we suppose that 
$D$ is either an extremal 
doubly even self-dual code of length $40$
or 
a doubly even self-dual code of length $40$
containing one codeword of weight $4$.
Also, let $D_{i,j}$ denote the self-dual code of length $38$ 
obtained  from $D$ by subtracting two coordinates $i,j$.

\begin{lemma}\label{lem:M}
Let $M(D)$ be the matrix with rows  composed of the codewords of weight $8$ 
in $D$, where the $(1,0)$-matrix $M(D)$ is regarded as a matrix over $\ZZ$.
\begin{itemize}
\item[\rm (1)]
Suppose that $D$ is extremal.
Then the $(i,j)$-entry of $M(D)^T M(D)$ is zero if and only if
$D_{i,j}$ is extremal.
\item[\rm (2)]
Suppose that $D$ 
contains
one codeword $x$ of weight $4$.
Then the $(i,j)$-entry of $M(D)^T M(D)$ is zero and
the pair of coordinates $i,j$ in $x$ are $10$ or $01$
if and only if $D_{i,j}$ is extremal.
\end{itemize}
\end{lemma}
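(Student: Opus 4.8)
The plan is to understand precisely when subtracting a coordinate pair from $D$ produces an extremal code of length $38$. Recall that $D_{i,j}$ is obtained by keeping those codewords of $D$ whose $i,j$-entries are $00$ or $11$ and deleting these two coordinates. A codeword of weight $w$ in $D$ with $i,j$-pattern $00$ survives with weight $w$, while one with pattern $11$ survives with weight $w-2$. The extremality of $D_{i,j}$ (minimum weight $\geq 8$) depends entirely on whether any low-weight codeword of $D$ descends to a codeword of weight $4$ or $6$ in $D_{i,j}$. So the key step is to track exactly which codewords of $D$ of weight $8$ (and, in part~(2), the weight-$4$ codeword) can contribute a codeword of weight $\leq 6$ after subtracting coordinates $i,j$.

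For part~(1), assume $D$ is extremal, so its minimum weight is $8$ and the only codewords that could drop below weight $8$ are those of weight $8$ with pattern $11$ on $\{i,j\}$ (dropping to weight $6$) — a weight-$8$ word with pattern $00$ stays at weight $8$, and words of weight $\geq 12$ stay at weight $\geq 10$. Thus $D_{i,j}$ fails to be extremal precisely when some weight-$8$ codeword has $1$'s in both positions $i$ and $j$. Now I would interpret the $(i,j)$-entry $n_{ij}$ of $M(D)^T M(D)$: since the rows of $M(D)$ are the weight-$8$ codewords viewed over $\ZZ$, the entry $n_{ij} = \sum_r M(D)_{ri} M(D)_{rj}$ counts exactly the number of weight-$8$ codewords that have a $1$ in both coordinate $i$ and coordinate $j$. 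Hence $n_{ij} = 0$ if and only if no weight-$8$ codeword carries pattern $11$ on $\{i,j\}$, which by the previous observation is equivalent to $D_{i,j}$ being extremal. This establishes part~(1).

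For part~(2), the situation is the same except that $D$ now also contains a single codeword $x$ of weight $4$ (and its complement $\allone+x$ of weight $36$, plus their cosets, but the relevant low-weight words are $x$ itself and the weight-$8$ words). The weight-$8$ analysis is unchanged, so the condition $n_{ij}=0$ again rules out any weight-$8$ word dropping to weight $6$. The new obstacle is $x$: after subtracting $i,j$, the word $x$ survives only if its pattern on $\{i,j\}$ is $00$ or $11$, in which case it descends to a codeword of weight $4$ or $2$ respectively, destroying extremality. Therefore $D_{i,j}$ can be extremal only when $x$ does \emph{not} survive, i.e.\ its pattern on $\{i,j\}$ is $10$ or $01$. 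Conversely, if $n_{ij}=0$ and $x$ has pattern $10$ or $01$ on $\{i,j\}$, then no weight-$8$ word drops to weight $6$, $x$ is discarded, and every remaining codeword of $D_{i,j}$ has weight $\geq 8$, so $D_{i,j}$ is extremal. Combining both directions gives the stated equivalence.

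The main subtlety — and the step I would check most carefully — is the weight-bookkeeping at the boundary: confirming that the only way to produce a word of weight $\leq 6$ in $D_{i,j}$ is through a weight-$8$ word with $11$-pattern (giving weight $6$) or, in case~(2), through the weight-$4$ word $x$. In particular I must verify there is no weight-$8$ word with a $00$-pattern masquerading as a problem (it stays at weight $8$), and that weight-$12$ words cannot drop below $8$ (they drop at most to $10$). Once this elementary but essential weight accounting is pinned down, the identification of $n_{ij}$ as the count of weight-$8$ codewords supported on both $i$ and $j$ makes the equivalence immediate in both parts.
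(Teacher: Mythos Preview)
Your proposal is correct and follows essentially the same approach as the paper's own proof: both hinge on the observation that the $(i,j)$-entry of $M(D)^T M(D)$ counts the weight-$8$ codewords with pattern $11$ on $\{i,j\}$, and that in part~(2) one must additionally track what the unique weight-$4$ codeword $x$ does under subtraction. Your write-up simply supplies the weight-bookkeeping details (why only weight-$8$ words with pattern $11$, and in case~(2) the word $x$, can produce a codeword of weight at most $6$ in $D_{i,j}$) that the paper's terse proof leaves implicit.
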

\begin{proof}
There is a codeword of weight $8$ in $D$ for which
the coordinates $i,j$ are $11$ if and only if
$D_{i,j}$ contains a codeword of weight $6$.
Suppose that $D$ contains one codeword $x$ of weight $4$.
The coordinates $i,j$ in $x$ are $11$ (resp.\ $00$) if and only if
$D_{i,j}$ contains a codeword of weight $2$ (resp.\ $4$).
\end{proof}

By Lemma \ref{lem:M},
from all inequivalent extremal doubly even self-dual 
codes and all inequivalent doubly even self-dual codes containing 
one codeword of weight $4$, 
we constructed extremal self-dual codes of length $38$ 
which need be checked further for equivalences.
Then we checked equivalences among these codes using
the method similar to that given in Section \ref{Sec:main}.
Finally, we have the following:

\begin{proposition}
There are $2744$ inequivalent extremal self-dual codes
of length $38$.
Of these $1730$ have weight enumerator (\ref{eq:WE1})
and $1014$ have weight enumerator (\ref{eq:WE2}).
\end{proposition}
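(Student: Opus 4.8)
The plan is to reduce the classification to a finite, explicit enumeration built on the completed classification of length-$40$ doubly even self-dual codes, and then to reduce the resulting list modulo equivalence. First, I would invoke completeness: every extremal self-dual code $C$ of length $38$ has one of the two weight enumerators (\ref{eq:WE1}), (\ref{eq:WE2}), and the lemma relating length-$38$ to length-$40$ codes shows that such a $C$ is obtained by subtracting two coordinates from a length-$40$ doubly even self-dual code $D$ that is extremal (in the (\ref{eq:WE1}) case) or contains exactly one codeword of weight $4$ (in the (\ref{eq:WE2}) case). Both families are available in full from Theorem~\ref{thm:main}: the $16470$ codes of $\mathcal{C}_{40,8}$, and the $N(1)=20034$ codes listed against $A_4=1$ in Table~\ref{Tab:40}. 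Hence it suffices to run through these two finite families and, for each member $D$, over all pairs of coordinates $i,j$.

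Next, for each $D$ I would use Lemma~\ref{lem:M} to pick out exactly the pairs $(i,j)$ for which $D_{i,j}$ is extremal, avoiding the need to construct and test every candidate. In the (\ref{eq:WE1}) case this amounts to forming $M(D)^T M(D)$ (whose rows are the weight-$8$ codewords) and selecting the off-diagonal zero entries; in the (\ref{eq:WE2}) case one additionally restricts to pairs where the unique weight-$4$ codeword $x$ has a $10$ or $01$ in positions $i,j$. Each selected pair yields a concrete extremal self-dual code $D_{i,j}$ of length $38$, and the weight enumerator it carries is determined by which family $D$ came from, so the two enumerators are produced by disjoint computations and their counts can be tallied separately.

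Finally I would sort the constructed codes into equivalence classes exactly as in Section~\ref{Sec:main}: first partition by the invariant $\alpha(C)$ (computed from $\#\Aut(C)$ via {\tt AutomorphismGroup} together with the design-matrix data $N(C)$), and then decide equivalence within each $\alpha$-class using {\tt IsIsomorphic}. Carrying this out should give $2744$ inequivalent codes, splitting as $1730$ with weight enumerator (\ref{eq:WE1}) and $1014$ with (\ref{eq:WE2}). The main obstacle is purely computational scale: the two families contribute on the order of tens of thousands of length-$40$ codes, each generating many admissible pairs $(i,j)$, so the candidate list is very large and the equivalence reduction is the bottleneck; the role of the $\alpha$-invariant is precisely to cut the number of expensive {\tt IsIsomorphic} calls down to a feasible level. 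No essential mathematical difficulty remains once completeness and Lemma~\ref{lem:M} are in hand; the content lies in organizing the computation so that it terminates.
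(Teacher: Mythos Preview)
Your proposal is correct and follows essentially the same route as the paper: use the lemma to reduce to subtracting coordinate pairs from the length-$40$ doubly even self-dual codes in $\mathcal{C}_{40,8}$ and in the $A_4=1$ class, filter admissible pairs by Lemma~\ref{lem:M}, and then test equivalence by the invariant-plus-\texttt{IsIsomorphic} method of Section~\ref{Sec:main}. The only nuance is that the paper says the equivalence test is carried out by a method ``similar to'' that of Section~\ref{Sec:main} rather than literally the same $\alpha(C)$, since that invariant was tailored to length $40$; but this does not affect the substance of your argument.
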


\begin{remark}
A classification of extremal self-dual codes of 
length $38$ was very recently obtained in \cite{AGKSS} by somewhat
different techniques.
This was indicated by Jon-Lark Kim
in a private communication \cite{Kim}.
\end{remark}

In Table \ref{Tab:38}, we list the numbers $N(\#\Aut)$ of
extremal self-dual codes with automorphism groups of
order $\#\Aut$ for both weight enumerators
(\ref{eq:WE1}) and (\ref{eq:WE2}).

\begin{table}[th]
\caption{Number of extremal self-dual codes of length $38$}
\label{Tab:38}
\begin{center}
{\small
\begin{tabular}{c|c|c|c|c}
\noalign{\hrule height0.8pt}
\multicolumn{5}{c}{$(\#\Aut,N(\#\Aut))$} \\
\hline
\multicolumn{5}{c}{Weight enumerator (\ref{eq:WE1})} \\
\hline
$(1,1480)$ &$(4,30)$ &$( 9,1)$ &$( 24,4)$ &$(342,1)$ \\
$(2, 177)$ &$(6, 7)$ &$(12,5)$ &$( 36,1)$ & \\
$(3,  15)$ &$(8, 7)$ &$(18,1)$ &$(168,1)$ & \\
\hline
\multicolumn{5}{c}{Weight enumerator (\ref{eq:WE2})}\\
\hline
$( 1, 773)$&$( 4,  38)$&$(12,   3)$&$(24,  10)$&$(216,  1)$\\
$( 2, 145)$&$( 6,  10)$&$(14,   1)$&$(144,  1)$&$(504,  1)$\\
$( 3,  21)$&$( 8,   8)$&$(21,   1)$&$(168,  1)$&\\
\noalign{\hrule height0.8pt}
  \end{tabular}
}
\end{center}
\end{table}

\bigskip \noindent
{\bf Some historical comments (July 27, 2012).} 
A classification of extremal self-dual codes of 
length $38$ was completed in  \cite{AGKSS}, and
a classification of all self-dual codes of length $38$
was completed in \cite{BB38}.
The paper \cite{AGKSS} was submitted before this paper 
was submitted, and the paper \cite{BB38} was submitted
after this paper was submitted.

\subsection*{Acknowledgements}
The authors would like to thank Jon-Lark Kim
for providing information on \cite{AGKSS}.



\end{document}